\theoremstyle{plain}
\newtheorem{theorem}{Theorem}[section]
\newtheorem{proposition}[theorem]{Proposition}
\newtheorem{lemma}[theorem]{Lemma}
\newtheorem*{question*}{Question}
\theoremstyle{definition} 
\newtheorem{definition}[theorem]{Definition}
\theoremstyle{remark}
\renewcommand{\emptyset}{\varnothing}
\renewcommand{\phi}{\varphi}
\newcommand{\Kb}{\overline{K}}
\newcommand{\R}{\ensuremath{\mathbb{R}}}
\newcommand{\F}{\ensuremath{\mathbb{F}}}
\newcommand{\sm}{\ensuremath{\mathfrak{m}}}
\newcommand{\Q}{\ensuremath{\mathbb{Q}}}
\newcommand{\Z}{\ensuremath{\mathbb{Z}}}
\newcommand{\sL}{\ensuremath{\mathscr{L}}}
\newcommand{\sK}{\ensuremath{\mathscr{K}}}
\DeclareMathOperator{\Div}{Div}
\DeclareMathOperator{\dv}{div}
\DeclareMathOperator{\Pic}{Pic}
\DeclareMathOperator{\Hp}{H}
\newcommand{\rY}{Y_{\F_q}}
\begin{document}

\title{Period and index for higher genus curves}
\author{Shahed Sharif}
\maketitle

\section{Period and index}
\label{sec:period-index}

Let $K$ be a field; usually, we will consider it to be a number field. By a \emph{curve} $C$ over $K$, we shall mean a smooth projective geometrically integral curve. The \emph{period} and \emph{index} of $C$ over $K$ are two integer invariants which measure the failure of $C$ to have rational points. Specifically, the index is the gcd of degrees of all effective divisors $D \in \Div C$---that is, effective divisors which are rational over $K$. Equivalently, the index is the gcd of degrees $[L:K]$, where $L/K$ ranges over algebraic extensions such that $C(L) \neq \emptyset$. To see this, note that if $P \in C(L)$, then $D = \sum \sigma P$ is a rational effective divisor, where $\sigma$ ranges over the embeddings of $L$ into an algebraic closure of $K$; and conversely any minimal rational effective divisor is of this form. Also observe that if $C$ already has $K$-points, then its index is $1$.

The period is less stringent: here we look at the smallest positive degree of rational divisor \emph{classes}; these are given by divisors which are linearly equivalent to their Galois conjugates. To see that the two invariants need not be the same, consider any conic over $\R$ without rational points, say the curve with affine piece $x^2 + y^2 = -1$. Certainly the index is $2$, but as our curve is genus $0$, there is a single divisor class of degree $1$; namely, the class of a point. Therefore this class must be rational over $\R$, and so the period is $1$.

As a rational divisor automatically belongs to a rational divisor class, we have $P \mid I$. Furthermore, the canonical class gives a rational divisor of degree $2(g-1)$, so $I \mid 2(g-1)$. Lichtenbaum in~\cite{lic69} found further conditions on the possible values of the period and index:
\begin{theorem}[Theorem 8, \cite{lic69}]
  Let $C/K$ be a curve over a field with genus $g$, period $P$, and index $I$. Then
  \begin{enumerate}[(i)]
      \item $P \mid I \mid 2P^2$, and
      \item if either $2(g-1)/I$ or $P$ is even, then $I \mid P^2$.
  \end{enumerate}
\end{theorem}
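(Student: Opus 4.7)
The approach uses the Leray exact sequence
$$0 \to \Pic C \to (\Pic \bar C)^{G_K} \xrightarrow{\beta} \Br K \to \Br C$$
for $C \to \Spec K$ with $\Gm$-coefficients. Fix $c \in (\Pic \bar C)^{G_K}$ of degree $P$ (which exists by definition of the period) and set $\alpha := \beta(c) \in \Br K$; denote by $i(\alpha)$ the Brauer index. From the induced degree map on $(\Pic \bar C)^{G_K}/\Pic C$ and the fact that period divides index in $\Br K$, one verifies $I/P \mid i(\alpha)$; conversely $\alpha \in \ker(\Br K \to \Br C)$ is split by every closed point of $C$, so $i(\alpha) \mid I$.

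The main geometric input is that for $D \in (\Pic \bar C)^{G_K}$ with $\ell(D) := h^0(C_{\bar K}, \sO(D)) \geq 1$, the complete linear system $|D|$ is naturally a Brauer-Severi variety over $K$ of dimension $\ell(D) - 1$ and class $\beta(D)$, whence $i(\beta(D)) \mid \ell(D)$. Applying this to $D = c$ and $D = K_C - c$---using $\beta(K_C - c) = -\alpha$ (since $K_C \in \Pic C$) and Riemann-Roch $\ell(c) - \ell(K_C - c) = P + 1 - g$---we deduce $i(\alpha) \mid P + 1 - g$, provided $P \neq g - 1$ so that at least one $\ell$-value is positive. Combined with $i(\alpha) \mid I \mid 2(g-1)$ from the canonical class, the linear combination $2(P+1-g) + 2(g-1) = 2P$ yields $i(\alpha) \mid 2P$; together with $I/P \mid i(\alpha)$ this gives $I \mid 2P^2$, proving (i). The boundary case $P = g - 1$ is immediate: $I \mid 2(g-1) = 2P$ forces $I \in \{P, 2P\}$, both dividing $2P^2$.

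For (ii), the factor of $2$ is stripped under the hypotheses. If $I \mid g - 1$ (i.e., $2(g-1)/I$ even), then $i(\alpha) \mid g - 1$; combined with $i(\alpha) \mid P + 1 - g$ this gives $i(\alpha) \mid P$, hence $I \mid P^2$. If $P$ is even and $I \nmid P^2$, then $v_2(I/P) = v_2(P) + 1$ (from $I/P \mid 2P$), so $v_2(I) = 2 v_2(P) + 1$ and $v_2(g-1) \geq 2 v_2(P)$ (from $I \mid 2(g-1)$); the condition $i(\alpha) \mid P - (g-1)$ with $v_2(i(\alpha)) \geq v_2(I/P) = v_2(P) + 1$ demands $v_2(P - (g-1)) \geq v_2(P) + 1$, but $v_2(g-1) \geq 2v_2(P) > v_2(P)$ (since $v_2(P) \geq 1$) forces $v_2(P - (g-1)) = v_2(P)$---a contradiction. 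The boundary case $P = g - 1$ in (ii) follows as in (i) by direct inspection.

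The main technical obstacle is establishing the Brauer-Severi structure on $|D|$ with class $\beta(D)$: although $\sO(D)$ on $C_{\bar K}$ generally fails to descend to $C$ over $K$ when $D$ is only Galois-invariant rather than rational, the functor of effective relative divisors in class $D$ nonetheless descends to a Brauer-Severi variety, and its class in $\Br K$ must be carefully identified with the descent obstruction $\beta(D)$. The $2$-adic parity analysis in the second half of (ii) is the secondary delicate point.
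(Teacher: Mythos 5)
The paper itself offers no proof of this statement: it is quoted directly from Lichtenbaum, so the only comparison available is with the argument in \cite{lic69} --- and your plan is, in substance, a reconstruction of that argument (descent obstruction $\beta$ into $\Br K$, Severi--Brauer varieties attached to linear systems of Galois-invariant classes, Riemann--Roch, and $I \mid 2g-2$ from the canonical class). Your bookkeeping checks out: $I/P$ divides the order of $\alpha$ and hence $i(\alpha)$ (note you tacitly use $P\mid I$, the elementary fact stated just before the theorem, to form $I/P$); $i(\alpha)\mid I$ because every closed point splits $\alpha$; $i(\alpha)\mid P+1-g$ in each subcase where some $\ell$-value is positive; the boundary $P=g-1$ genuinely needs separate treatment and your direct inspection works in both parts (for $P$ even one has $I\in\{P,2P\}$ and $2P\mid P^2$ precisely because $P$ is even); and the $2$-adic contradiction in (ii) is correct, since $v_2(g-1)\geq 2v_2(P)>v_2(P)$ forces $v_2\bigl(P-(g-1)\bigr)=v_2(P)<v_2(P)+1$. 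The one genuine gap is the item you flag yourself: the descent of $|D|$ to a Severi--Brauer variety of dimension $\ell(D)-1$ whose Brauer class is $\pm\beta(D)$ is asserted, not proved, and it is exactly the technical heart of Lichtenbaum's proof. It is true and classical (Ch\^atelet, Roquette; it is what Lichtenbaum's preliminary sections supply), so your proposal is a correct plan resting on a known but unproved key lemma rather than a complete proof; the sign ambiguity in that class is harmless since only the index is used, and over imperfect fields one should work with the separable closure throughout.
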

We say a triple of integers $(g, P, I)$ is \emph{admissible} if they satisfy the divisibility conditions of Lichtenbaum's theorem; that is, if they are possible values for the genus, period, and index of a curve. The goal of this paper is to determine whether every admissible triple indeed occurs as the invariants of some curve over a number field. Our main result is the following:
\begin{theorem} \label{highergenus}
  Given any admissible triple $(g, P, I)$ such that $4\nmid I$, there exists a number field $K$ and a curve $Y$ over $K$ with genus $g$, period $P$ and index $I$.
\end{theorem}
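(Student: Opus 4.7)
My plan is to reduce to the genus one case, where principal homogeneous spaces of elliptic curves provide a rich supply of curves with prescribed period and index, and then lift the construction to arbitrary genus by a suitable cyclic branched cover.

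First, I would realize $(P, I)$ in genus one. For admissible $(P, I)$ with $4 \nmid I$, one can produce a torsor $C_0$ of an elliptic curve $E$ over a number field $K$ with period $P$ and index $I$ using $\sha(E/K)$ (by methods going back to Lichtenbaum and Cassels, and refined by later authors). The hypothesis $4 \nmid I$ is expected to enter at this stage: when $4 \mid I$, the classes in $\sha$ needed to force $I = 2P$ require local invariants with extra compatibility that is not always realizable.

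Second, I would construct $Y$ as a cyclic cover $\pi \colon Y \to C_0$ of degree $d = 1 + 2(g-1)/I$, totally tamely ramified over a rational effective divisor $B$ of degree $I$ on $C_0$. The existence of such $B$ is guaranteed by the definition of the index, and $d$ is a positive integer precisely because $I \mid 2(g-1)$ by admissibility. Riemann-Hurwitz then gives
\[
2 g(Y) - 2 = (d - 1) \deg B = 2(g - 1),
\]
so $Y$ has genus $g$ as required.

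The main difficulty is to verify that $\mathrm{index}(Y) = I$ and $\mathrm{period}(Y) = P$. The divisibilities $I \mid \mathrm{index}(Y)$ and $P \mid \mathrm{period}(Y)$ follow from pushforward $\pi_*$ of effective divisors and divisor classes to $C_0$, which preserves degree and sends rational (classes of) divisors to rational (classes of) divisors; any strictly smaller rational effective divisor or class on $Y$ would contradict the invariants of $C_0$. For the reverse divisibilities, the reduced preimage $\pi^{-1}(B)_{\mathrm{red}}$ is an explicit rational effective divisor of degree $I$ on $Y$, so $\mathrm{index}(Y) \leq I$; and a rational divisor class of degree $P$ on $Y$ is obtained by combining this ramification class with the pullback $\pi^{*}[D]$ of a rational class $[D]$ of degree $P$ on $C_0$, provided $\gcd(I, dP) = P$. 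This last compatibility, amounting to $\gcd(d, I/P) = 1$, is where the hypothesis $4 \nmid I$ is again decisive, since it constrains $I/P \in \{1, 2\}$-like values and leaves enough parity freedom in $d = 1 + 2(g-1)/I$ to secure coprimality. I expect the bulk of the technical work in the paper to lie in carrying out this last verification cleanly across the various cases of $I/P$.
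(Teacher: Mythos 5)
Your plan breaks down at the very first step, and the failure is structural rather than technical. For a genus one curve the quantity $2(g-1)/I$ is $0$, which is even, so Lichtenbaum's conditions force $I \mid P^2$ for \emph{every} torsor of an elliptic curve; no amount of work with $\Sha$ can produce a genus one curve $C_0$ with, say, period $1$ and index $2$. But triples such as $(g,P,I)=(2,1,2)$, or more generally any admissible triple with $P$ odd and $I \nmid P^2$ (these occur exactly when $2(g-1)/I$ is odd), satisfy the hypotheses of the theorem, and for them your base curve $C_0$ does not exist. This is precisely why the paper does \emph{not} try to realize $(P,I)$ in genus one: it realizes $(\widehat P,\widehat I)$ with $\widehat I = I/2$ when $I$ is even on a genus one curve $X$, and imports the missing factor of $2$ from a separately constructed genus $2$ curve $Y'$ with period $1$ or $2$ and index $2$, gluing the two by a fiber product over an elliptic curve. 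Relatedly, your account of where $4\nmid I$ enters is off: Theorem~\ref{genus1K} realizes \emph{all} admissible genus-one triples, including those with $4\mid I$; in the paper the hypothesis $4\nmid I$ is used to keep $\widehat I$ odd so that the divisibility sandwich $P(X)\mid P(Y)\mid 2P(X)$, $I(X)\mid I(Y)\mid 2I(X)$, $P(Y)\mid mI(X)P(Y')$, etc., pins down $P(Y)$ and $I(Y)$.

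The second half of your argument also has a gap. Your degrees are forced: $d = 1 + 2(g-1)/I$ is determined by $g$ and $I$, so there is no ``parity freedom in $d$,'' and the coprimality $\gcd(I/P,\, d)=1$ that you need to manufacture a rational class of degree $P$ simply fails in admissible cases: take $(g,P,I)=(10,3,9)$ (admissible, $4\nmid I$), where $d=3$ and $I/P=3$. Your claim that $4\nmid I$ constrains $I/P$ to values like $1$ or $2$ is false; $I/P$ can be any divisor of $2P$ compatible with admissibility. Finally, even granting the numerology, the existence over $K$ of a cyclic degree-$d$ cover of $C_0$ totally and tamely ramified exactly over a prescribed rational divisor $B$ of degree $I$ is not automatic: it requires the class of (a suitably weighted) $B$ to be divisible by $d$ in $\Pic$, compatibly with the Galois action, and nothing in your set-up guarantees this. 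The paper avoids all of these issues by making the cover $Y \to Y'$ \'etale (pulled back from an isogeny-composed map $X \to E''$ of genus one curves), computing the genus by Riemann--Hurwitz with no ramification term, and controlling period and index through Lemma~\ref{mapdiv} together with the explicit genus $2$ constructions of Proposition~\ref{genus2K}.
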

Similar results have been proven earlier, including a complete answer in the genus $1$ case:
\begin{theorem}[Theorem 2, \cite{sha12}] \label{genus1K}
  Let $(1,P,I)$ be an admissible triple. Let $E$ be an elliptic curve over a number field $K$. Then there exists a genus 1 curve $X$ which is a principal homogeneous space for $E$ with period $P$ and index $I$.
\end{theorem}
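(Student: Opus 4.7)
The plan is to realize the desired genus-1 curve $X$ as a cohomology class $\xi \in H^1(K,E)$ and to control its period and index via local conditions. The period of the principal homogeneous space attached to $\xi$ equals the order of $\xi$ in $H^1(K,E)$. Following Lichtenbaum, for each $n$ with $P \mid n \mid P^2$ there is an explicit obstruction class $\Ob_n(\xi) \in \Br(K)$, constructed via a cup product with $\xi$ in the Kummer sequence, whose vanishing is equivalent to $X$ carrying a $K$-rational divisor of degree $n$. The index $I$ is thus the smallest $n$ in this range for which $\Ob_n(\xi)$ vanishes. I therefore aim to construct $\xi$ of exact order $P$ whose associated obstruction vanishes at $n=I$ but at no smaller $n$ with $P \mid n$.

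The construction is local-to-global. First I would choose a finite set $S$ of finite places of $K$ at which $E$ has split multiplicative reduction, so that Tate uniformization gives $E(K_v) \cong K_v^\times / q_v^\Z$ and both $H^1(K_v, E)$ and the obstruction map $\Ob_n$ acquire explicit descriptions via Kummer theory on $K_v^\times$. At such places I construct local classes $\xi_v$ of any prescribed order dividing $P$, together with computable local invariants $\inv_v \Ob_n(\xi_v) \in \Q/\Z$. I then invoke the Poitou--Tate nine-term exact sequence, together with Chebotarev to produce auxiliary primes that absorb any reciprocity defects, in order to glue $(\xi_v)_{v \in S}$ into a global class $\xi \in H^1(K,E)$ with the prescribed local behavior at $S$ and trivial invariants elsewhere.

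The main obstacle is that all the obstruction classes $\Ob_n(\xi)$ are built from the single cohomology class $\xi$, so their local invariants are correlated: forcing $\Ob_I(\xi)$ to vanish has ripple effects on $\Ob_m(\xi)$ for other $m$. For each $n$ with $P \mid n \mid P^2$ I need global reciprocity $\sum_v \inv_v \Ob_n(\xi_v) = 0$ to hold exactly when $I \mid n$. Handling this simultaneously across all such $n$ requires a careful apportionment of local invariants across a sufficiently large $S$, so that the resulting linear system over $\Z/P^2\Z$ is solvable; the existence of enough auxiliary primes with appropriate splitting behavior in the $P^2$-division field of $E$ is guaranteed by Chebotarev. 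Once the local data is in place, the assembly of the global $\xi$ is formal, and the resulting $X$ is a principal homogeneous space for $E$ with period $P$ and index $I$.
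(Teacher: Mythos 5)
The paper does not prove this statement: it is quoted verbatim from \cite{sha12}, so there is no internal argument to compare against. Your sketch is, at a high level, in the spirit of the actual proof in \cite{sha12} and its precursor \cite{CS10} (period--index obstruction classes, local computations at well-chosen places, a local-to-global gluing), but as written it has a gap that is fatal for the theorem in the stated generality. You propose to do all the local work at places of \emph{split multiplicative reduction}, using Tate uniformization. The theorem is asserted for an arbitrary elliptic curve $E$ over a number field, and such a curve may have \emph{no} places of multiplicative reduction at all (any CM curve has potentially good reduction everywhere); even when such places exist there are only finitely many, while your argument needs an unbounded supply of auxiliary primes to ``absorb reciprocity defects.'' This is exactly why the Tate-curve method is confined to the local-field paper \cite{sha07}, as the present paper remarks. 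The global proof instead works at primes of \emph{good} reduction that split completely in $K(E[n],\mu_n)$ --- Chebotarev supplies infinitely many of these for every $E$ --- where the obstruction map becomes an explicit Hilbert-symbol computation via the Weil pairing.

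A second, more structural issue: the obstruction is not a single class $\Ob_n(\xi)\in\Br(K)$ attached to $\xi\in\Hp^1(K,E)$ whose vanishing is equivalent to $I\mid n$. O'Neil's period--index obstruction lives on $\Hp^1(K,E[n])$ (equivalently, one obstructs each rational divisor class in $\Hp^0(K,\Pic\Xbar)$ separately), and $I\mid n$ iff \emph{some} lift of $\xi$ to $\Hp^1(K,E[n])$ --- a full coset of the image of $E(K)/nE(K)$ --- has vanishing obstruction. Moreover the obstruction map is quadratic in its argument, not additive, so the final step of your sketch, solving ``a linear system over $\Z/P^2\Z$,'' does not model the actual constraint. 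Controlling the obstruction simultaneously over all lifts and all $n$ with $P\mid n\mid P^2$ is the real technical content of \cite{CS10} and \cite{sha12}, and it is precisely the step your proposal leaves unargued.
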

A slightly weaker version of the above was proved in~\cite{CS10}, and was used to construct Shafarevich-Tate groups with arbitrarily high $p$-rank. 

Over other fields there are a scattering of results. In the case of $p$-adic fields, Lichtenbaum found stricter divisibility conditions:
\begin{theorem}[Theorem 7, \cite{lic69}]
  If $C$ is a curve over a finite extension of $\Q_p$ with genus $g$, period $P$, and index $I$, then $P \mid (g-1)$, $I \mid 2P$, and $I = P$ if $(g-1)/P$ is even.
\end{theorem}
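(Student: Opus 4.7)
The plan is to exploit that $\Br(K) \cong \Q/\Z$ for $K$ a finite extension of $\Q_p$, which makes the Brauer-type obstructions controlling the period and index vastly more constrained than over a general field. The fundamental setup is the five-term exact sequence from the Leray spectral sequence for $\Gm$ on $C \to \Spec K$: using $H^2(C_{\bar K}, \Gm) = 0$ (Tsen) and $H^1(K, \Gm) = 0$ (Hilbert 90), one obtains
\[
0 \to \Pic(C) \to \Pic(C_{\bar K})^{G_K} \to \Br(K).
\]
Combined with the degree sequence $0 \to J \to \Pic \to \Z \to 0$, this shows $\Pic(C_{\bar K})^{G_K}/\Pic(C)$ embeds into $\Br(K) = \Q/\Z$ with cyclic image of order $I/P$, generated by an obstruction class $\omega$ attached to $[\Pic^P]$. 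The theorem thus reduces to controlling $\omega$.

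For $P \mid g-1$, I would exhibit a $K$-rational theta characteristic: a line bundle $L$ on $C$ with $L^{\otimes 2}$ isomorphic to the canonical bundle, which then has degree $g-1$. The obstruction to existence of such $L$ lies in $H^1(K, J[2])$ (from multiplication-by-$2$ on $J$), and Tate local duality combined with the nondegenerate alternating Weil pairing on $J[2]$ forces this obstruction to vanish over a $p$-adic field (the analogous fact is classical, cf.~Atiyah on theta characteristics). For $I \mid 2P$, I would use the principal polarization $J \cong J^\vee$ coming from the theta divisor, together with Tate local duality $H^1(K, J) \cong J^\vee(K)^*$, to construct a self-pairing $H^1(K, J) \times H^1(K, J) \to \Br(K)$; the alternation properties associated with a symmetric principal polarization force $2\omega = 0$ once $\omega$ is identified with (an integer multiple of) the self-pairing of $\xi = [\Pic^1]$, giving $I/P \mid 2$. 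For the refined assertion that $I = P$ when $(g-1)/P$ is even, an explicit evaluation of this self-pairing produces $\omega = \bigl((g-1)/P\bigr) \cdot \omega_0$ for a distinguished class $\omega_0 \in (\Q/\Z)[2]$, so the parity hypothesis kills $\omega$.

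The main obstacle is pinning down the precise formula $\omega = \bigl((g-1)/P\bigr) \cdot \omega_0$ and verifying the alternation step. The subtlety is that the principal polarization on a Jacobian depends on a choice of theta characteristic, equivalently on a shift identifying $\Pic^{g-1}$ with $J$, so the symmetry properties of the induced cup-product pairing carry an Arf-invariant-type correction. Untangling this correction, together with the identification of $\omega$ with a value of the self-pairing, is exactly what produces both the factor of $2$ in $I \mid 2P$ and the parity condition on $(g-1)/P$ in the final claim.
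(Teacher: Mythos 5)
The paper does not actually prove this statement---it is quoted verbatim from Lichtenbaum \cite{lic69}---so your proposal can only be measured against Lichtenbaum's own argument. You have identified the correct skeleton: the obstruction map $\delta\colon \Pic(C_{\overline{K}})^{G_K}\to\Br(K)\cong\Q/\Z$ from the Leray sequence, together with Tate--Lichtenbaum duality for the Jacobian. But the two places where your sketch makes a concrete deduction both fail.

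First, the argument for $P\mid(g-1)$. Tate local duality does not force $H^1(K,J[2])$ to vanish: that group is self-dual, and by the local Euler characteristic formula its order is $(\#J[2](K))^2$ times a power of $p$ supported at $p=2$, so it is nonzero whenever $J[2](K)\neq 0$ or $p=2$. Hence duality gives no reason for your specific obstruction class to die, and you have not shown that a curve over a $p$-adic field admits a rational theta characteristic---a statement much stronger than the theorem and not what Lichtenbaum proves. What is actually needed is only that $(g-1)\xi=0$ in $H^1(K,J)$, where $\xi=[\mathbf{Pic}^1_{C/K}]$; by perfectness of the Tate pairing $H^1(K,J)\times J(K)\to\Q/\Z$ this reduces to showing $(g-1)\langle\xi,a\rangle=0$ for all $a\in J(K)$, and evaluating $\langle\xi,a\rangle$ as $-\inv\delta(a)$ and then invoking Riemann--Roch on $\Pic^{g-1}$ is precisely the content of Lichtenbaum's duality theorems.

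Second, the pairing you propose for $I\mid 2P$ does not exist over a local field in the form described: $H^2(K,A)=0$ for any abelian variety $A$ over a $p$-adic field, and Tate duality pairs $H^1(K,J)$ against $J^\vee(K)$, not against itself. The ``self-pairing of $\xi$'' you have in mind is the Cassels--Tate computation of Poonen and Stoll, which lives on $\Sha$ over a global field. Locally, the factor of $2$ in $I\mid 2P$ and the parity condition on $(g-1)/P$ both fall out of Lichtenbaum's explicit formula for $\inv\delta$ on divisor classes of positive degree (roughly, $\inv\delta$ of a Galois-invariant class of degree $n$ lies in $\frac{n}{2P}+\frac{1}{P}\Z/\Z$); your plan explicitly defers this formula as ``the main obstacle,'' so the substance of the proof is not supplied. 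A smaller bookkeeping error: the quotient $\Pic(C_{\overline{K}})^{G_K}/\Pic(C)$ need not have order $I/P$, since $\delta$ is in general already nonzero on the degree-zero part $\Pic^0(C_{\overline{K}})^{G_K}=J(K)$; only its image modulo $\delta(J(K))$ has order $I/P$.
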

For results on period and index over more general fields, see~\cite{CL15}. Applications of the period and index pop up typically in relation to the size of Shafarevich-Tate groups; see for example~\cite{PS99} and \cite{LLR04}.

\section{Outline of proof}

The method of proof is as follows. We will first construct a genus one curve $X$ with period $P(X)$ and index $I(X)$, and a degree $mI(X)$ map $\phi: X \to E''$ where $E''$ is some elliptic curve isogenous to the Jacobian of $X$ and $m$ is some integer. Then we will construct a smooth, projective genus 2 curve $Y'$ with period $P(Y') = 1$ and index $I(Y') = 1$ or $2$, and equipped with a certain degree $2$ map $\psi$ to $E''$. Then the curve in the theorem will be given by the fiber product $Y$ in the diagram
\[
  \label{square}
  \xymatrix{Y \ar[r] \ar[d] & X \ar[d]^\phi \\ Y' \ar[r]_\psi & E''}.
\]
As the genera of $X$ and $E''$ are both $1$, $\phi$ must be a finite \'etale morphism, and therefore the projection $Y \to Y'$ is finite \'etale as well. This implies that $Y$ is smooth and projective. Applying the Hurwitz formula with respect to the map $Y\to Y'$, the genus of $Y$ is $mI(X)+1$. We will show that, with some extra conditions, the period and index of $Y$ are $P(Y')P(X)$ and $I(Y')I(X)$, respectively.

This method is essentially the same as that used in the local case in~\cite{sha07}. However, the situation in the local case is easier and much more explicit, since we are able to make use of Tate curves. Furthermore, the set of admissible triples is much bigger in the number field case than in the local case.

A key fact is the following lemma. 
\begin{lemma} \label{mapdiv}
  Let $K$ be a field. If $\phi\colon C \to C'$ is a $K$-rational degree $d$ map of curves, then 
  \begin{enumerate}[(a)]
      \item $P(C') | P(C)$ and $I(C') | I(C)$; and
      \item $P(C) | dP(C')$ and $I(C) | dI(C')$.
  \end{enumerate}
\end{lemma}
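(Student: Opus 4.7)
The plan is to prove both statements by invoking the pushforward and pullback of divisors along the finite morphism $\phi$, and checking that each operation has the appropriate effect on degree, effectiveness, $K$-rationality, and linear equivalence. Both operations are defined geometrically and are equivariant for the action of $\Gal(\Kb/K)$, so they automatically preserve $K$-rationality of divisors and of divisor classes.

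For part (a), I would work with the pushforward $\phi_*\colon \Div C \to \Div C'$ defined on closed points by $\phi_*(P) = [k(P):k(\phi(P))]\,\phi(P)$ and extended linearly. The key facts to record are that $\phi_*$ preserves degree, sends effective divisors to effective divisors, is Galois-equivariant, and descends to $\Pic C \to \Pic C'$ because $\phi_*(\dv f) = \dv(\Nm_\phi f)$ for any rational function $f$ on $C$. Given an effective rational divisor on $C$ of minimal degree $I(C)$, its pushforward is an effective rational divisor on $C'$ of the same degree, so $I(C') \mid I(C)$; the analogous statement for classes gives $P(C') \mid P(C)$.

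For part (b), I would work with the pullback $\phi^*$, defined on closed points by $\phi^*(Q) = \sum_{P \mapsto Q} e_P\, P$, where $e_P$ is the ramification index of $\phi$ at $P$. The key facts here are that $\phi^*$ multiplies degrees by $d$, preserves effectiveness, is Galois-equivariant, and descends to $\Pic C' \to \Pic C$ because $\phi^*(\dv f) = \dv(f \circ \phi)$. Pulling back an effective rational divisor on $C'$ of degree $I(C')$ yields an effective rational divisor on $C$ of degree $dI(C')$, so $I(C) \mid dI(C')$; the analogous statement for classes gives $P(C) \mid dP(C')$.

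There is no real obstacle here: the only non-bookkeeping step is verifying that $\phi_*$ and $\phi^*$ respect linear equivalence, which follows from the norm and pullback identities above. The mildly delicate point is making sure that the equivariance for $\Gal(\Kb/K)$ really does carry $K$-rational objects to $K$-rational objects, but this is automatic since both constructions are intrinsic to the $K$-morphism $\phi$.
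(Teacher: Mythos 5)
Your proposal is correct and follows essentially the same route as the paper: pushforward $\phi_*$ (degree-preserving, Galois-equivariant) gives part (a), and pullback $\phi^*$ (degree multiplied by $d$) gives part (b), with the same argument applied to rational divisors for the index and to rational classes for the period. The paper's proof is just a terser version of yours, leaving the norm and functoriality facts you spell out implicit.
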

    
\begin{proof}
  Let us consider the claims on the periods. If $\sL$ is a $K$-rational divisor class on $C$, then $\phi_*\sL$ is a $K$-rational class on $C'$ with the same degree, proving $P(C') | P(C)$. On the other hand, $\phi^*$ multiplies degrees by $d$, proving $P(C) | dP(C')$. The conclusions for the indices follows from a similar argument on rational divisors.
\end{proof}

To obtain the map $X \to E''$ we will need the following result:
\begin{lemma}\label{lemma:phs-index-map}
  Suppose $X$ is a genus 1 curve with index $I$ and Jacobian $E$. Then there is a degree $I$ $K$-rational morphism $X \to E$.
\end{lemma}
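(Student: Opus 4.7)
The plan is to use the definition of the index to produce a $K$-rational effective divisor $D$ on $X$ of degree $I$ --- concretely, writing $D = \sum_\sigma P^\sigma$ for some $P \in X(L)$ with $[L{:}K] = I$, where $\sigma$ ranges over the embeddings $L \hookrightarrow \bar K$ fixing $K$ --- and then to use $D$ (together with the trivialization of the torsor $X$ over $L$) to define the morphism $\phi\colon X \to E$.

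The key observation is that over $L$ the point $P$ trivializes the $E$-torsor $X_L$, yielding an $L$-rational \emph{isomorphism} $\psi_L\colon X_L \to E_L$, $Q \mapsto Q - P$, where $Q - P$ denotes the unique element of $E$ translating $P$ to $Q$ under the torsor action. This $\psi_L$ has degree $1$ over $L$, and the base change $\Spec L \to \Spec K$ has degree $I$, so combined they should produce a $K$-rational morphism $X \to E$ of degree $I$. I would implement this by descending $\psi_L$ via Weil restriction: one obtains a canonical $K$-rational morphism $X \to \mathrm{Res}_{L/K} E_L$, which is then composed with a suitable $K$-rational morphism $\mathrm{Res}_{L/K} E_L \to E$ to land in $E$.

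The main obstacle is the degree computation, i.e.\ ensuring that the composite has degree exactly $I$. The most obvious Abel--Jacobi candidate, $\phi(Q) = [D - I\,Q] \in \Pic^0(X) = E$, is manifestly $K$-rational but, after identifying $X_{\bar K} \cong E_{\bar K}$ via a choice of base point $P_0 \in X(\bar K)$, collapses to $R \mapsto -IR + c$ on $E$ for some constant $c$, and so has degree $I^2$ rather than $I$. The naive trace $(e_1,\dots,e_I) \mapsto \sum e_i$ from the Weil restriction likewise reintroduces the extra factor of $I$. The technical heart of the proof is to carry out the descent of $\psi_L$ --- which is of degree $1$ over $L$ --- in such a way that the resulting $K$-rational morphism from $X$ to $E$ picks up the factor of $I$ from the base change $[L{:}K] = I$ \emph{once}, yielding degree $I$ rather than $I^2$.
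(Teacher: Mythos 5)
Your proposal stops exactly where the proof has to happen: you name the degree computation---getting degree $I$ rather than $I^2$---as the ``technical heart,'' but you never supply that step, so what you have is a plan rather than a proof. Moreover, no such step can exist. Choose $P_0 \in X(\overline{K})$ and identify $X_{\overline{K}}$ with $E_{\overline{K}}$; any non-constant morphism $X_{\overline{K}} \to E_{\overline{K}}$ is then a translation composed with an endomorphism of $E$, so its degree is the degree of an endomorphism. If $\End(E_{\overline{K}}) = \Z$ (no CM), these degrees are perfect squares; so for, say, $I=3$ and a non-CM Jacobian there is no degree-$3$ morphism $X \to E$ over any field, and no descent through $\mathrm{Res}_{L/K}E_L$ can produce one: any composite $X \to \mathrm{Res}_{L/K}E_L \to E$ is, over $\overline{K}$, of the form $q \mapsto \bigl(\sum_\sigma \alpha_\sigma\bigr)(q) + c$ with $\alpha_\sigma \in \End(E_{\overline{K}})$, hence again of norm (in particular square, in the non-CM case) degree. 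The obstacle you flagged is therefore not a technicality to be engineered around; it defeats the whole strategy, and indeed the statement as literally phrased (a degree-$I$ map to the Jacobian itself, for arbitrary $I$).

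For comparison, the paper's own proof is precisely the ``naive trace'' you rejected: it takes an $L$-rational isomorphism $f_0\colon X \to E$ attached to a point $P \in X(L)$ with $[L:K]=I$ and sets $f = \sum_\sigma f_0^\sigma$, asserting this has degree $I$. Your computation is the correct one: the conjugates $f_0^\sigma$ differ from one another by translations, so over $\overline{K}$ the sum is multiplication by $I$ composed with a translation and has degree $I^2$. So while your write-up is not a complete proof, your analysis of the obstruction is sound and in fact exposes a genuine gap in the paper's argument. A repairable version must either settle for the degree-$I^2$ Abel--Jacobi map attached to $D$, or produce a degree-$I$ map to a quotient $E/C$ by a $K$-rational subgroup of order $I$ chosen so that the class of the torsor $X$ dies in $\Hp^1(K, E/C)$---that is, a map to a curve merely isogenous to $E$, which is all that the paper's outline of the main construction actually requires.
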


\begin{proof}
  Let $D$ be a rational divisor on $X$ of degree $I$. By Riemann-Roch, we may assume that $D$ is effective. Since $D$ is a minimal effective divisor on $X$, it must be of the form $\sum \sigma P$, where $P \in X(L)$ for some degree $I$ field extension $L/K$ and  $\sigma$ ranges over the embeddings of $L$ into an algebraic closure $\Kb$ of $K$. Since $X(L) \neq \emptyset$, there exists an $L$-rational isomorphism $f_0: X \to E$. Define $f = \sum f_0^\sigma$; one sees that $f$ is the desired morphism.
\end{proof}

We will also need the following result from class field theory.
\begin{lemma} \label{dirichlet}
  If $C_\sm$ is the ray class group with modulus $\sm$ over $K$, and if $\lambda$ is a class in $C_\sm$, then there are infinitely many primes of $K$ whose class in $C_\sm$ is $\lambda$.
\end{lemma}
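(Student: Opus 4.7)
The plan is to deduce this from class field theory together with the Chebotarev density theorem; it is the natural extension of Dirichlet's theorem on primes in arithmetic progressions to a general number field and modulus. Concretely, I would begin by invoking the existence of the ray class field $K_\sm/K$ with modulus $\sm$: this is a finite abelian extension of $K$ whose Galois group is canonically isomorphic to $C_\sm$ via the Artin reciprocity map, with primes $\sp$ of $K$ coprime to $\sm$ being unramified in $K_\sm$ and mapping to their Frobenius elements $\mathrm{Frob}_\sp \in \Gal(K_\sm/K)$.

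Next, I would apply the Chebotarev density theorem to the extension $K_\sm/K$. This says that for any conjugacy class $\mathcal{C}$ in $\Gal(K_\sm/K)$, the set of primes $\sp$ of $K$ unramified in $K_\sm$ with $\mathrm{Frob}_\sp \in \mathcal{C}$ has natural (hence Dirichlet) density $|\mathcal{C}|/[K_\sm:K]$, and in particular is infinite. Because $\Gal(K_\sm/K) \isom C_\sm$ is abelian, every conjugacy class is a singleton; taking the class $\mathcal{C} = \{\lambda\}$ under the Artin isomorphism then yields infinitely many primes $\sp$ of $K$ (coprime to $\sm$) with class exactly $\lambda$ in $C_\sm$, as required.

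Two minor points are worth noting. First, primes dividing $\sm$ are finite in number and so, whether or not they are assigned a class in $C_\sm$, they cannot affect the infinitude of the conclusion. Second, if one prefers an approach that does not invoke the full existence theorem of class field theory, one can instead prove the statement by showing non-vanishing at $s=1$ of the Hecke $L$-functions $L(s, \chi)$ attached to the characters $\chi$ of $C_\sm$, together with orthogonality of characters, exactly mirroring Dirichlet's original argument; the main obstacle there is the non-vanishing, which is the generalization of $L(1,\chi) \neq 0$ for Dirichlet $L$-functions. In either case, the ``main obstacle'' is entirely contained in the classical results being cited, and no additional argument beyond the reductions above is needed.
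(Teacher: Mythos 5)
Your argument is correct: the ray class field together with Artin reciprocity and Chebotarev (or, alternatively, non-vanishing of Hecke $L$-functions at $s=1$) is the standard proof of this generalization of Dirichlet's theorem. The paper itself offers no argument beyond citing the result in Koch's book, so your sketch simply supplies the proof underlying that citation and is consistent with the paper's approach.
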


\begin{proof}
  This is a well-known generalization of Dirichlet's theorem on primes in arithmetic progressions. See for example~\cite[p.~83, Theorem~1.111]{koc97}.
\end{proof}
To prove the main theorem, we will first deal with curves of genus 2, then the higher genus case.

\section{Genus 2 curves}

In this section, our goal is to show
\begin{proposition}\label{genus2K}
  Let $E/K$ be an elliptic curve such that $E[2]\subset E(K)$ and let $(2,P,I)$ be an admissible triple. Then there is an at most biquadratic extension $K'$ of $K$ such that over $K'$, there exists a genus 2 curve $Y$ with period $P$ and index $I$, equipped with a degree~2 map $\psi\colon Y\to E$.
\end{proposition}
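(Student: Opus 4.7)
Using $E[2]\subset E(K)$, write $E\colon v^2=(u-e_1)(u-e_2)(u-e_3)$ with $e_i\in K$. For $d\in {K'}^\times$, where $K'/K$ is an at most biquadratic extension to be chosen, define
\[
  Y_d\colon\; y^2 = (dx^2-e_1)(dx^2-e_2)(dx^2-e_3),
\]
a smooth bielliptic genus $2$ curve over $K'$ equipped with the $K'$-rational degree $2$ morphism $\psi_d\colon Y_d\to E$, $(x,y)\mapsto(dx^2,y)$. Admissibility for $g=2$ restricts $(P,I)$ to $\{(1,1),(1,2),(2,2)\}$. The leading coefficient of the defining sextic is $d^3$, so the two points at infinity on the smooth projective model of $Y_d$ are $K'$-rational iff $d$ is a square in $K'$, and otherwise they form a $\Gal(K'(\sqrt{d})/K')$-orbit whose sum is a $K'$-rational effective divisor of degree $2$. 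Combined with the always-rational canonical class of degree $2$, this forces $P\mid I\mid 2$.

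For the triple $(2,1,1)$, I choose $K'=K$ and $d=1$; the two rational points at infinity then give $P=I=1$.

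For the two cases with $I=2$, I take $d$ non-square in $K'$ and arrange a local obstruction. By Lemma~\ref{dirichlet} applied to a suitable modulus capturing the square classes of the $e_i$ and their differences, one can find a prime $\sp$ of $K'$ at which the $e_i$ reduce to distinct nonzero residues with $-e_1e_2e_3$ a non-residue, and at which $d$ has odd valuation. A case analysis on the valuation of $x$ then shows $Y_d(K'_{\sp})=\emptyset$, hence $I(Y_d)=2$.

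It remains to distinguish $P=1$ from $P=2$. Twice the class $[\Pic^1(Y_d)]\in H^1(K',J_d)$ equals the trivial class of the canonical divisor, so $[\Pic^1(Y_d)]$ is $2$-torsion and is exactly the obstruction to $P=1$. Using the $K'$-isogeny $J_d\sim E\times(E')^{(d)}$, where $E'\colon w^2=u\prod(u-e_i)$ is the other elliptic quotient of $Y_d$ (coming from the involution $(x,y)\mapsto(-x,-y)$), I push $[\Pic^1(Y_d)]$ into $H^1(K',E)\oplus H^1(K',(E')^{(d)})$ and analyze its components. By imposing further local conditions on $d$, again via Lemma~\ref{dirichlet}, I can either force both components to vanish (giving the triple $(2,1,2)$) or keep at least one nontrivial (giving $(2,2,2)$). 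Enforcing two essentially independent conditions on $d$, one controlling local solvability and one controlling the $\Pic^1$-class, is what necessitates the biquadratic base extension. The \emph{main difficulty} is identifying the vanishing condition for $[\Pic^1(Y_d)]$ explicitly in terms of $d$ modulo the square classes of the $e_i$ and $e_i-e_j$, which requires careful bookkeeping of the Weierstrass divisor of $Y_d$ through both elliptic quotients.
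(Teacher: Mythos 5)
Your construction (a bielliptic genus~2 cover of $E$ cut out by adjoining a square root, with a twist parameter playing the role the paper gives to $\pi,\alpha$ in $z^2=\pi x+\alpha$) is in the same spirit as the paper's, and the easy parts are fine: the $(2,1,1)$ case is the paper's Case~1, and your valuation/nonresidue case analysis at a prime $\sp$ with $v_\sp(d)$ odd is close to the paper's analysis at $(\pi)$. But there are two gaps. The smaller one: $Y_d(K'_{\sp})=\emptyset$ does \emph{not} give $I(Y_d)=2$. Since $I\mid 2$, $I=2$ means there is no closed point of odd degree, so you must rule out points of $Y_d$ over \emph{every odd-degree extension} of $K'_{\sp}$, not just $K'_{\sp}$-points; the paper makes exactly this point ("similar reasoning holds if we replace $K_v$ with any odd degree extension"). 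Your computation does extend (odd residue extensions preserve non-squareness, odd ramification keeps $w(d)$ odd), but the inference as written is invalid and must be repaired. Also, a prime at which $-e_1e_2e_3$ is a nonresidue exists only if $-e_1e_2e_3\notin K'^{\times 2}$, which can fail for the given $E$; you would need to translate the model, whereas the paper avoids the issue by choosing the prime $(\pi)$ first (via a ray class condition that simultaneously guarantees solvability at all other relevant places) and only then choosing $\alpha$ to be a nonsquare at $\pi$.

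The essential gap is the period control, i.e.\ distinguishing $(2,1,2)$ from $(2,2,2)$, which is the heart of the proposition and which you explicitly defer as the "main difficulty." Moreover the proposed mechanism is flawed: pushing $[\mathbf{Pic}^1]$ along an isogeny $J_d\to E\times (E')^{(d)}$ and forcing both components to vanish does not make the class trivial, because the induced map on $H^1$ has kernel the image of $H^1$ of the (nontrivial) kernel of the isogeny; and for the $(2,2,2)$ case you give no actual criterion certifying nontriviality, only the assertion that suitable local conditions on $d$ exist. The paper needs two genuinely different tools here, and your plan has no substitute for either. For $P=2$ it shows $Y$ is deficient at exactly one place --- local solvability at all other places via the congruence $\pi\equiv 1\pmod{\sm}$, Weil bounds for the (possibly singular) reductions, and Hensel's lemma --- and then invokes the Poonen--Stoll result that an odd number of deficient places forces $P\nmid g-1=1$. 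For $P=1$, $I=2$ it base-changes to the explicit biquadratic field $K\bigl(\sqrt{(\alpha+\pi b)/\alpha},\sqrt{(\alpha+\pi c)/\alpha}\bigr)$, checks that $(\pi)$ splits there so the local obstruction (hence $I=2$) survives, and then exhibits an explicit degree-3 Galois-stable divisor class (supported on Weierstrass points, with $\sigma D-D$ principal via an explicit function), which together with the canonical class gives a rational class of degree~1. Until you supply concrete analogues of these two steps --- a verifiable vanishing criterion for $[\mathbf{Pic}^1]$ and a verifiable nonvanishing criterion compatible with keeping the index equal to $2$ --- the two nontrivial cases of the proposition are not proved.
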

In the genus 2 case, Lichtenbaum's theorem states that there are three possibilities:
\begin{enumerate}
    \item $P(Y)=I(Y)=1$,
    \item $P(Y)=1$ and $I(Y)=2$, and
    \item $P(Y)=I(Y)=2$.
\end{enumerate}

Write $E$ as $y^2=x(x-b)(x-c)$ with $b,c$ algebraic integers in $K$---we can do this since $E[2]\subset E(K)$. 

\paragraph{Case 1: $P(Y)=I(Y)=1$.}
Let $Y$ be the curve with function field $K(E)(\sqrt{x-a})$ for any $a\in K$ not equal to $0, b$, or $c$. Letting $z^2 = x - a$, we obtain the affine model for $Y$
$$
y^2=(z^2+a)(z^2+a-b)(z^2+a-c).
$$
Then $Y$ has two rational points at infinity, and hence period and index 1. The map $\psi$ is the obvious one. In this case, the field $K'$ is equal to $K$.

\paragraph{Case 2: $P(Y)=I(Y)=2$.}
Let $T$ be the 2-torsion point $(0,0)\in E$. Then there is a canonical isogeny 
$$
\psi\colon E\rightarrow \widetilde{E}:=E/\{O, T\}
$$
and a descent pairing
\begin{equation} \label{descpair}
  \frac{E(K)}{\widehat{\psi} (\widetilde{E}(K))} \times E[\psi] \rightarrow \frac{K^*}{K^{*2}}
\end{equation}
The function $x\in K(E)$ satisfies $\dv (x)=2(T)-2(O)$. Also, the pairing above is given by $(P,T)\mapsto x(P) \pmod{K^{*2}}$ for $P\neq T, O$. See, for example, \cite[Proposition~X.4.9]{sil86}. Note that these facts hold if we base-extend to a finite extension of $K$.

For a valuation $v$ of $K$, let $K_v$ be the completion of $K$ at $v$. Assume all nonarchimedean valuations $v$ are normalized so that $v(K_v^*)=\Z$. The following method is taken from \cite{CTP00}, specifically Lemmas~2.2 and 2.4. Let $S$ be the subset of the primes of $K$ which is the union of the set of primes dividing $bc$, the set of primes for which the corresponding residue field has fewer than 17 elements, the primes lying over $2$, and the set of real primes. Let $\sm$ be the modulus supported precisely on $S$ and such that the multiplicity of the primes in $S$ is one, with the exception of primes lying over $(2)$. At primes $v$ lying over $(2)$, we specify that $v(\sm)=2v(2)+1$. We apply Lemma~\ref{dirichlet} with $\lambda = (1)$ to find a principal prime $(\pi)$ of $K$ satisfying $\pi\equiv 1 \pmod{\sm}$; the prime can be taken to be principal since the identity in any ray class group is a class of principal ideals. Choose $\alpha\in K^*$ which is integral and a nonsquare in the completion of $K$ at $(\pi)$.

Now we are ready to construct $Y$. Adjoin the function $z$ to the function field of $E$ given by $z^2=\pi x+\alpha$. Then $Y$ has affine model 
\begin{equation} \label{Y:(2,2,2)}
  y^2=\pi^{-3}(z^2-\alpha)(z^2-\alpha-\pi b)(z^2-\alpha-\pi c).
\end{equation}

\begin{definition}[\cite{PS99}, p.~14]
  Let $C$ be a genus $2$ curve over $K$. A place $v$ of $K$ is said to be \emph{deficient} if the index of $C_v := C \times K_v$ is $2$.
\end{definition}
Note that $C(K_v) \neq \emptyset$ for all but finitely many $v$, and so for these places the local period and index are $1$. Therefore the number of deficient places is finite.

Deficiency is important because it can tell us whether the period and index are equal or not:
\begin{lemma}
  Fix a curve $C$ of genus $g$ over $K$. If the number of deficient places is odd, then the period $P(C)$ does not divide $g-1$.
\end{lemma}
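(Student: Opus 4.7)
The plan is to prove the contrapositive: if $P(C) \mid g-1$, then the number of deficient places is even. The argument constructs a Brauer class $\beta \in \Br(K)$ whose local invariants detect deficiency, then invokes global reciprocity $\sum_v \inv_v \beta = 0$ in $\Q/\Z$.

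Under the hypothesis $P \mid g-1$ there is a rational divisor class $\sL \in \Pic(C)(K)$ of degree $g-1$. Choose a finite Galois extension $L/K$ over which $\sL_L = [D]$ for some $D \in \Div(C_L)$. For each $\sigma \in \Gal(L/K)$ the divisor $D^\sigma - D$ is principal, say $\dv(f_\sigma)$ for some $f_\sigma \in L(C)^\times$. A direct divisor computation shows that $c_{\sigma,\tau} := f_\sigma \cdot f_\tau^\sigma \cdot f_{\sigma\tau}^{-1}$ has trivial divisor, hence lies in $L^\times$ and defines a 2-cocycle whose class $\beta(\sL) \in \Br(L/K) \subseteq \Br(K)$ depends only on $\sL$.

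Next I would identify $\beta(\sL)_v = 0$ with non-deficiency of $v$. Applying Hilbert~90 to the sequence $1 \to \overline{K}^\times \to \overline{K}(C)^\times \to \mathrm{PDiv}(C_{\overline K}) \to 0$, a splitting of the local cocycle over $K_v$ produces $h \in \overline{K_v}(C)^\times$ such that $D + \dv(h)$ is Galois-stable, i.e., a $K_v$-rational representative of $\sL_v$; the converse (a rational representative kills the cocycle) is immediate. For genus~$2$, a $K_v$-rational divisor of degree $g-1=1$ splits uniquely as the difference of two Galois-stable effective divisors with disjoint support whose degrees differ by~$1$, forcing $I(C_v) = 1$; conversely a local point together with the fact that Abel--Jacobi is surjective on $K_v$-points when $C_v(K_v)\neq\emptyset$ produces such a representative of $\sL_v$.

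Lichtenbaum's local theorem gives $I(C_v) \mid 2P(C_v) \mid 2(g-1)$, which for genus~$2$ forces $\beta(\sL)_v$ to be $2$-torsion, so $\inv_v \beta(\sL) \in \{0, 1/2\}$ and equals $1/2$ precisely at deficient $v$. Global reciprocity then forces the number of $1/2$s to be even. The main technical hurdle is the local identification of $\beta(\sL)_v$ with the index obstruction, i.e., matching our explicit cocycle with the period--index obstruction implicit in \cite{lic69}; once this identification is secured the parity argument is essentially automatic.
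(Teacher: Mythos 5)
Your global architecture (a Brauer class attached to a rational class of degree $g-1$, local invariants detecting deficiency, then reciprocity) is a genuinely different route from the paper, which simply views $V=\mathbf{Pic}^{g-1}_{C/K}$ as a torsor under the Jacobian, observes $V(K)\neq\emptyset$ iff $P(C)\mid g-1$, and quotes Corollary~12 of \cite{PS99}; your plan essentially re-derives the parity content of that corollary. But as written it has a genuine gap exactly at the load-bearing step, namely the claim that $\inv_v\beta(\sL)=0$ at every non-deficient place. Non-deficiency (for genus $2$) means only $I(C_v)=1$, i.e.\ there is a $K_v$-rational divisor of degree $1$; it does not give a $K_v$-point, and your argument for this direction invokes ``a local point together with the fact that Abel--Jacobi is surjective on $K_v$-points when $C_v(K_v)\neq\emptyset$.'' Index $1$ without rational points really occurs (e.g.\ a genus $2$ curve over $\Q_p$ with good reduction to a pointless genus $2$ curve over $\F_2$ or $\F_3$ has closed points of coprime degrees but no $K_v$-point), so the evaluation/splitting mechanism you rely on is unavailable; and over a local field a Galois-invariant class need not be represented by a rational divisor in general, so the vanishing is not automatic. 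Similarly, the assertion that $\beta(\sL)_v$ is $2$-torsion does not follow merely from Lichtenbaum's divisibilities $I(C_v)\mid 2P(C_v)\mid 2(g-1)$; some mechanism linking the local index to the order of $\beta(\sL)_v$ is needed, and you explicitly defer this identification as ``the main technical hurdle,'' which is precisely the part that must be proved.

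The gap is fillable, and in a way that repairs both weak points at once: by exactness of the Hochschild--Serre five-term sequence, $\beta(\sL)$ lies in $\ker(\Br K\to\Br C)$, so restricting to the residue field of a closed point of $C_{K_v}$ of degree $I(C_v)$ and applying corestriction gives $I(C_v)\,\beta(\sL)_v=0$. Hence $\beta(\sL)_v=0$ whenever $I(C_v)=1$ (non-deficient), and $\beta(\sL)_v$ is at most $2$-torsion when $I(C_v)=2$; it is nonzero at deficient places by the argument you do give (a rational representative of $\sL_v$ would be a rational divisor of odd degree $1$). With that, reciprocity yields the parity statement for $g=2$, which is all the paper actually uses (for general $g$ your degree bookkeeping only kills $\beta(\sL)_v$ by $2(g-1)$, so showing the local invariant is exactly $0$ or $1/2$ requires the finer local analysis carried out in \cite{PS99}). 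So: right idea and a workable alternative to the paper's two-line reduction to \cite[Corollary 12]{PS99}, but the local input must be supplied as above rather than via a rational point.
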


\begin{proof}
  Let $J$ be the Jacobian of $C$, and consider $V := \mathbf{Pic}^{g-1}_{C/K}$, the degree $g-1$ component of the Picard scheme of $C$. Observe that $V$ is a principal homogeneous space for $J$ which has a rational point if and only if $P(C) \mid (g-1)$. As a consequence of~\cite[Corollary 12]{PS99}, if the number of deficient places is odd, then $V$ is a nontrivial principal homogeneous space, and hence has no rational points. The claim follows.
\end{proof}
We will prove that $Y$ has period 2 by showing that there is exactly one place where it is deficient, in our case the place corresponding to $\pi$, and that $Y$ is not deficient elsewhere. 

Let $v$ be any place for which $v(\sm)>0$, including the real places. By our choice of $\pi \equiv 1 \pmod{\sm}$, the points at infinity are rational over $K_v$. (At real places, the congruence means $\pi$ is positive.) Therefore, none of these places are deficient.

Now we consider places $v$ such that $v(\pi\cdot\sm)=0$. If $v$ is archimedean, then it must be complex, and $Y$ must have $K_v$ points. If $v$ is non-archimedean, we consider the reduction $\rY$ of the curve in the residue field of $K_v$; here, we let $q$ be the order of the residue field.. Recall that the primes dividing $bc$ also divide $\sm$, so that $v(\pi b) = v(\pi c)=0$. Therefore at least one of $v(\alpha), v(\alpha+\pi b), v(\alpha+\pi c)$ is zero, and so $\rY$ is reduced. By the Weil bounds for singular curves~\cite[Corollary~2.4]{AP96},
\[
  \# \rY(\F_q) \geq q + 1 - 4\sqrt{q}.
\]
Our hypotheses on $\sm$ imply that $q \geq 17$, implying that $q + 1 - 4\sqrt{q} > 1$ and therefore $\# \rY(\F_q) \geq 2$. The curve $\rY$ has at most one singular point, at $y = z = 0$. (Note that $\sm$ contains even primes.) Thus $\rY(\F_q)$ has at least one smooth point, which we can lift via Hensel's lemma to obtain an element of $Y(K_v)$. This shows that the index of $Y_v$ is $1$, and $v$ is not deficient.

Finally we consider the case where $v$ corresponds to $(\pi)$. Since $E$ has good reduction at $v$ and $v(P)=0$, the theory of the pairing~\eqref{descpair} tells us that $2\mid v(x(Q_E))$ for all $Q_E\in E(K_v)$ (see \cite[Theorem X.1.1]{sil86}). Suppose $Q\in Y(K_v)$. By inspection, neither of the points at infinity on $Y$ (with respect to the model~\eqref{Y:(2,2,2)}) lie in $Y(K_v)$, so we may assume $Q$ lies in the affine part of $Y$. Consider the right-hand side of equation~\eqref{Y:(2,2,2)}, which we will call $f(z)$. Let $Q_E$ be the image of $Q$ in $E(K_v)$, so that $z(Q)^2=\pi x(Q_E) + \alpha$. Our assumption that $\alpha$ is a nonsquare in $K_v$ implies that $x(Q_E)\neq 0$. If $v(x(Q_E))<0$, then since this valuation is even, $v(x(Q_E))<-1$. Therefore $2v(z(Q))=v(\pi x(Q_E))$; but the latter is odd. We conclude that $v(z(Q))$ lies in $\frac{1}{2}\Z \setminus \Z$, which contradicts our choice of $Q\in Y(K_v)$. If instead $v(x(Q_E))\geq 0$, then $2v(z(Q))=v(\alpha)=0$. Then $v(f(z(Q)))=-3$, implying $v(y(Q))=-3/2$. Once again, this is impossible. Therefore no such $Q$ can exist, so $Y(K_v)=\emptyset$. The descent pairing is functorial in $K$, so similar reasoning holds if we replace $K_v$ with any odd degree extension, which proves that the index of $Y$ must be even.

Therefore $Y$ is deficient at exactly one place of $K$, and has period 2 as desired.

Note that, as in the previous case, $K'=K$.

\paragraph{Case 3: $P(Y)=1$, $I(Y)=2$.}
The last case is only slightly more work. We use the same curve as in case 2, but now we base extend to 
\[
  K' = K\left(\sqrt{\frac{\alpha+\pi b}{\alpha}}, \sqrt{\frac{\alpha+\pi c}{\alpha}}\right).
\]
Both radicands are congruent to 1 $\pmod{(\pi)}$, and since $(\pi)$ does not lie above $(2)$, $(\pi)$ splits in this extension. Therefore we can repeat the reasoning above to show that, for $v$ a place of $K'$ lying over $(\pi)$, $Y(K'_v)$ is empty, and therefore the index of $Y$ is 2.

Consider the divisor on $\overline{Y} := Y \times \overline{K}$
\[
  D=(\sqrt{\alpha}, 0)+(\sqrt{\alpha+\pi b},0)+(\sqrt{\alpha+\pi c}, 0).
\]
The elements $\sqrt{\alpha}$, $\sqrt{\alpha+\pi b}$ and $\sqrt{\alpha + \pi c}$ each generate the same quadratic extension $K''$ of $K'$. The unique nontrivial automorphism $\sigma$ of $K''$ over $K'$ sends each square root to its additive inverse, so that the unique nontrivial conjugate of the divisor above is 
\[
  \sigma D =(-\sqrt{\alpha}, 0)+(-\sqrt{\alpha+\pi b},0)+(-\sqrt{\alpha+\pi c}, 0).
\]
Let $\sK$ be the divisor at infinity, i.e. the sum of the two points at infinity. (The notation comes from the fact that $\sK$ lies in the canonical class of $\overline{Y}$.) The function $z-\sqrt{\alpha}$ has divisor $2(\sqrt{\alpha}, 0)-\sK$, and similarly for $z-\sqrt{\alpha+\pi b}$ and $z-\sqrt{\alpha+\pi c}$. The function $y$ has divisor $D+\sigma D - 3\sK$. Therefore 
\[
  \dv \left(\frac{y}{(z-\sqrt{\alpha})(z-\sqrt{\alpha+\pi b})(z-\sqrt{\alpha+\pi c})}\right) = \sigma D - D
\]  
which shows $\sigma D$ is linearly equivalent to $D$. This proves that the class of $D$ is a degree 3 element of $\Hp^0(K', \Pic \overline{Y})$. Therefore the period of $Y$ equals $1$.

\section{Proof of Theorem~\ref{highergenus}}
\label{sec:proof-theor-refm}

We may assume that $g \geq 3$. The proof will be split into three cases:
\begin{enumerate}[(i)]
    \item $2 \nmid P$ and $P \mid I^2$,
    \item $2 \nmid P$ and $P \nmid I^2$, and
    \item $2 \mid P$.
\end{enumerate}
But before tackling these cases, we will conduct some general set-up. Consider an admissible triple $(g,P,I)$ with $4\nmid I$. If $2\mid I$ write $\widehat{I}=I/2$. If $2 \nmid I$, let $\widehat{I}=I$. Similarly if $P$ is even, let $\widehat{P}=P/2$, and if $P$ is odd $\widehat{P}=P$. Notice that $\widehat{I}$ must divide $(g-1)$ since $I$ divides $(2g-2)$. Let $m$ be $(g-1)/\widehat{I}$. We willalso need the following ingredients.
\begin{itemize}
    \item An elliptic curve $E$ over a number field $K$ satisfying $E[\widehat{P}]\subset E(K)$ and, if $\widehat{P}$ is even, $E[4]\subset E(K)$.
    \item An isogeny $\phi_0\colon E\to E'$ with kernel isomorphic to $\Z/\widehat{P}\Z \times \Z/\ell\Z$, where $\ell = \widehat{I}/\widehat{P}$. 
    \item An isogeny of degree $m$, $E'\to E''$, where $E''$ is an elliptic curve satisfying $E''[2]\subset E(K)$.
\end{itemize}
In case (i), we will additionally need the Mordell-Weil rank of $E'$ to be at least 1. In case (ii), we may need to replace $K$ with a quadratic or biquadratic extension of $K$. Regardless, these conditions are easy to satisfy: take any elliptic curve over $\Q$ for $E$, say, then let $K$ be a sufficiently large finite extension over which all of the conditions hold.

Next we apply Proposition~\ref{genus2K} to get a genus 2 curve $Y'$ equipped with a degree 2 map $\psi\colon Y'\to E''$. In case (i), we choose $Y'$ so that $P(Y')=I(Y')=1$. In case (ii), we choose $Y'$ so that $P(Y')=1$ and $I(Y')=2$---we may need to replace $K$ with a quadratic or biquadratic extension in order to construct $Y'$, which we do now. In case (iii), we choose $Y'$ so that $P(Y')=I(Y')=2$.

Next apply Theorem~\ref{genus1K} to find a genus 1 curve $X$ with Jacobian $E'$, period $\widehat{P}$, and index $\widehat{I}$. We would like to construct a rational degree $m\widehat{I}$ map $\phi\colon X\to E''$. We do this by first constructing a degree $\widehat{I}$ map $f:X\to E'$ as in Lemma~\ref{lemma:phs-index-map}, which we compose with the isogeny $\phi_0$.

Finally construct the fiber product in diagram~\eqref{square}, which we recall here:
$$
\xymatrix{Y \ar[r] \ar[d] & X \ar[d]^\phi \\ Y' \ar[r]_\psi & E''.}
$$
The following facts are a direct consequence of the diagram and Lemma~\ref{mapdiv}:
\begin{enumerate}
    \item $P(X)\mid P(Y)\mid 2P(X)$ 
    \item $I(X)\mid I(Y)\mid 2I(X)$
    \item $P(Y')\mid P(Y)\mid mI(X)P(Y')$
    \item $I(Y')\mid I(Y)\mid mI(X)P(Y')$ 
\end{enumerate}
Since the map $\phi$ is a map between genus 1 curves, it must be unramified. Therefore the left hand map in the fiber product diagram, $Y\to Y'$, is also unramified. By the Riemann-Hurwitz formula, we have
$$
2g(Y)-2=m\widehat{I}(2g(Y')-2)
$$
from which $g(Y)-1=m\widehat{I}$. Therefore $g(Y)=g$. It remains to show that $P(Y)=P$ and $I(Y)=I$.

\paragraph{Case i: $2\nmid P, I\mid P^2$.}
There is one further specification that must be made in the set-up above. In the construction of $Y'$, which should have period and index 1, recall from the proof of Proposition~\ref{genus2K} that we formed the function field of $Y'$ by adjoining the square root of the function $x-a$ to the function field of $E''$ for any $a \neq 0, b, c$. (A Weierstrass model of $E''$, $y^2=x(x-b)(x-c)$, was given.) For this case, we must specify the choice of $a$. By hypothesis $E'$ has positive rank. Let $Q'\in E'(K)$ have infinite order. Let $Q''$ be the image of $Q'$ under the degree $m$ isogeny $E'\to E''$. Then choose $a=x(Q'')$. Since $Q''$ has infinite order, $a\neq 0,b,c$. Additionally, $Q''$ is a ramification point of $\psi$.

The divisibility conditions in this case imply that $P$ and $I$ are odd, and therefore $\widehat{P}=P$ and $\widehat{I}=I$. The set-up tells us that $P(X)=P$, and $I(X)=I$. Also, according to the set-up, we have chosen $Y'$ to satisfy $P(Y')=I(Y')=1$. 

We now prove that $I(Y)=I$. Let $[Q']$ denote the prime divisor associated to $Q'$. Taking the pullback of $[Q']$ under $X\to E'$ gives a rational divisor of degree $I$. Pulling back to $Y$ gives a rational divisor $D$ of degree $2I$. 

But $\psi$ is ramified over $Q''$. Since $\phi$ is an unramified map, the projection $Y\rightarrow X$ is ramified over every point of $D$; i.e., $D=2D_0$ for some rational divisor $D_0$. Then $D_0$ has degree $I$.

From the divisibility conditions in the last section, $I\mid I(Y)$. Together with the above, we have $I(Y)=I$.

Since $P(Y)\mid I(Y)$ and $I(Y)=I$ is odd, then $P(Y)$ must be odd as well. But $P\mid P(Y)\mid 2P$, so $P(Y)=P$ also.

\paragraph{Case ii: $2\nmid P, I\nmid P^2$.}

By Lichtenbaum's theorem, $I\mid 2P^2$, and hence $I$ is even. The general set-up tells us that $P(X)=\widehat{P}=P$ and $I(X)=\widehat{I}=I/2$. Since $4\nmid I$, $I(X)$ is odd. Also in the set-up, we chose $Y'$ so that $P(Y')=1$ and $I(Y')=2$.

Recall $m = (g-1)/\widehat{I}$. Since $\widehat{I}=I/2$, we have $m=(2g-2)/I$. As $(g,P,I)$ is admissible, $m$ must be \emph{odd}, or else by Lichtenbaum's theorem we would have $I\mid P^2$. At this point, just writing out the divisibility conditions gives the result: $P(X)\mid P(Y)\mid mI(X)P(Y')=mI(X)$, which is odd, and $P(Y)\mid 2P(X)$, and therefore $P(Y)=P(X)=P$. Also, $I(X)\mid I(Y)\mid 2I(X)$ and $2=I(Y') \mid I(Y)$, and hence $I(Y)=2I(X)=I$.

\paragraph{Case iii: $2\mid P$.}

In this case, both $P$ and $I$ are even. Therefore $\widehat{P}=P/2$ and $\widehat{I}=I/2$. From the set-up, we have $P(X)=P/2$, $I(X)=I/2$, $P(Y')=2$, and $I(Y')=2$. Also, the hypotheses of the theorem require $4\nmid I$.

According to Lichtenbaum's theorem, $2\mid P$ implies that $I\mid P^2$. Now $I(X)$ must be odd, and since $P(X) | I(X)$, $P(X)$ must also be odd. The divisibility conditions immediately imply $P(Y)=P$ and $I(Y)=I$.

This completes the proof of Theorem~\ref{highergenus}.

\bibliographystyle{alpha}
\end{document}